\documentclass[11pt,a4paper]{amsart}

\usepackage[foot]{amsaddr}
\usepackage{graphicx,datetime}

\usepackage{amsthm} 
\usepackage{amssymb} 
\usepackage{amsmath} 
\usepackage[hidelinks]{hyperref} 
\usepackage[capitalize,nameinlink]{cleveref} 
\usepackage{tikz}
\usetikzlibrary{decorations.markings}
\usepackage{pgfplots}
\usepackage{tikz-cd} 
\usepackage{comment}
\usepackage{subcaption}


\newtheorem{theorem}{Theorem}[section]
\newtheorem*{theorem*}{Theorem}
\newtheorem{proposition}[theorem]{Proposition}

\newtheorem*{lemma*}{Lemma}

\newtheorem{corollary}[theorem]{Corollary}

\newtheorem*{conjecture*}{Conjecture}

\theoremstyle{definition}
\newtheorem{definition}[theorem]{Definition}
\newtheorem*{definition*}{Definition}
\newtheorem{example}[theorem]{Example}

\newtheorem{remark}[theorem]{Remark}
\newtheoremstyle{noparens}{}{}{\itshape}{}{\bfseries}{.}{ }{\thmname{#1}\thmnumber{ #2}\thmnote{ {\mdseries #3}}}
\theoremstyle{noparens}

\newtheorem*{theoremnop*}{Theorem}

\newtheoremstyle{noparensdef}{}{}{}{}{\bfseries}{.}{ }{\thmname{#1}\thmnumber{ #2}\thmnote{ {\mdseries #3}}}
\theoremstyle{noparensdef}


\newcommand{\R}{\mathbb{R}} 
\newcommand{\Z}{\mathbb{Z}} 
\newcommand{\N}{\mathbb{N}} 
\renewcommand{\S}{\mathbb{S}} 

\newcommand{\id}{e}



\newcommand{\<}{\left\langle}
\renewcommand{\>}{\right\rangle}



\makeatletter
\newcommand*\owedge{\mathpalette\@owedge\relax}
\newcommand*\@owedge[1]{%
  \mathbin{%
    \ooalign{%
      $#1\m@th\bigcirc$\cr
      \hidewidth$#1\m@th\wedge$\hidewidth\cr
    }%
  }%
}
\makeatother


\tikzset{->-/.style={decoration={
  markings,
  mark=at position #1 with {\arrow{>}}},postaction={decorate}}}
\tikzset{->>-/.style={decoration={
  markings,
  mark=at position #1 with {\arrow{>>}}},postaction={decorate}}}
\tikzset{->>>-/.style={decoration={
  markings,
  mark=at position #1 with {\arrow{>>>}}},postaction={decorate}}}
\tikzset{->>>>-/.style={decoration={
  markings,
  mark=at position #1 with {\arrow{>>>>}}},postaction={decorate}}}






\usepackage{euscript,color}

\setlength{\headsep}{8mm}
\setlength{\footskip}{8mm}


\setlength{\textwidth}{14cm}
\setlength{\textheight}{23.5cm}
\setlength{\topmargin}{.0cm}
\setlength{\oddsidemargin}{12mm}
\setlength{\evensidemargin}{12mm}

\begin{document}
\title{Fundamental regions for non-isometric group actions}
\thanks{
The first author was supported by 
 the Australian Research
Council (Discovery Program DP190102360). The second author 
was supported through an Australian Government Research Training Program  Scholarship (RTP Fees Offset).
 }
\author{Thomas Leistner}
\address[Thomas Leistner, corresponding author]{School of Computer and Mathematical Sciences, University of Adelaide, SA 5005, Australia}
\email{thomas.leistner@adelaide.edu.au}
\author{Stuart Teisseire}\address[Stuart Teisseire]{Department of Mathematics, University of Auckland, Private Bag 92019, Auckland~1142, New Zealand}
\email{stuart.teisseire@gmail.com}

\subjclass[2010]{Primary 
54H11; 
Secondary
22F99,
54B15,
53C18,
}
\keywords{Fundamental regions, continuous group actions, cocompact group actions, essential homotheties} 

\begin{abstract}

We generalise results about isometric group actions on metric spaces and their fundamental regions to the context of merely continuous group actions. In particular, we obtain results that yield the relative compactness of a fundamental region for a cocompact group action.
As a consequence, 
we obtain a criterion for a cocompact cyclic group of semi-Riemannian homotheties to be inessential.

 \end{abstract}

\maketitle

\maketitle

\section{Introduction}
Fundamental regions and fundamental domains are  important concepts when studying group actions on vector spaces, manifolds or topological spaces. They incorporate essential information about the action itself and about the space of orbits.
	The notion of a fundamental polygon for the elliptic modular group goes back to  Gauss and Dedekind and was later extended by Poincar\'{e} to Fuchsian groups, see \cite[Section 6.9]{Ratcliffe2019HyperbolicManifolds} for historical notes. Consequently the notion 
 has been extensively studied and used in  hyperbolic geometry and more generally in the context of isometric group actions on metric spaces (and our main reference for this will be \cite{Ratcliffe2019HyperbolicManifolds}). Beyond the context of isometric group actions on metric spaces, fundamental domains have been studied for projective actions of Coxeter groups \cite{Vinberg71,Davis08}, for 
 affine actions of $\R^n$ (in relation to Auslander's conjecture), and in particular for groups acting by isometries on Minkowski space, see surveys  \cite{Abels01,DancigerDrummGoldman20}. Extending results for Minkowski space, isometric actions on certain conformally flat Lorentzian manifolds were analysed in \cite{DancigerGueritaudKassel16} for anti-de Sitter space and \cite{CharetteFrancoeurLareau-Dussault14} for the Einstein Universe.
 
Our motivation to study more general group actions on the one hand originates from questions in \cite{LeistnerTeisseire22}, where we study conformal group actions on non-conformally flat symmetric Lorentzian manifolds and their compact quotients, and on the other hand from 
an interest in the general context of continuous group actions on  topological spaces. 
For group actions that are proper and smooth on a smooth manifold, an 
  auxiliary invariant Riemannian metric exists   that turns the action into an isometric action for the metric induced by the Riemannian metric \cite[Theorem 2]{Koszul65}. In fact, the existence of an invariant metric is equivalent to properness \cite{AbelsManoussosNoskov11}. 
 In general however, such an auxiliary Riemannian metric is not unique, and bears no relation to the geometric structure of interest, such as a semi-Riemannian metric or a conformal structure, and in our applications there was no direct way to relate both. Hence, we avoid making use of this approach.
 
For our motivation from conformal geometry recall that a diffeomorphism $\phi$ of a semi-Riemannian manifold $(M,g)$ is a {\em conformal transformation} if there is a smooth function $f$ such that $\phi^*g=\mathrm{e}^{2f} g$,   a {\em homothety} if $f$ is constant, and  {\em non-trivial} if $\phi $ is not an isometry.
A conformal transformation is called {\em inessential} if there is a metric $\hat g$ that is conformally equivalent to $g$ and for which $\phi$ is an isometry.  Otherwise $\phi$ is {\em essential}. We are interested in the question when a homothety is (in)essential. 
 It can easily be shown that a non-trivial homothety with a fixed point is essential, see \cite[Proposition 2.5]{LeistnerTeisseire22}.
For example, a constant scaling $\phi(x)= \lambda x$ of Euclidean space $\R^n$, with $\lambda\not=\pm1$,  is an essential homothety for the Euclidean metric $g$. However, if the origin is removed from $\R^n$, $\phi$ acts freely and is no longer essential, as it is an isometry for the metric $\|x\|^{-2} g$. When analysing this phenomenon in a more general context, using the fundamental domain of the group generated by a homothety $\phi$,  the following property turns out to be relevant: a fundamental region $R$ for a group $G$ acting continuously on a topological space $X$ is \emph{strongly locally finite} if its closure $\overline R$ has an open  neighbourhood $U$, such that $\{g U\}_{g\in G}$ is a locally finite family of sets. About the relation between this notion and essentiality  we observe the following, with a proof provided in Section~\ref{appl}.
	\begin{proposition}\label{prop_homoth-essential-implies-no-FSA-FR}
		Let $(M,g)$ be a semi-Riemannian manifold and  $\phi$  a non-trivial homothety of $(M,g)$.
		If the group  generated by $\phi$ admits a strongly locally finite fundamental region, then $\phi$ is not essential.
	\end{proposition}
The property of being strongly locally finite is indeed strictly stronger than the well-known property of local finiteness of  fundamental domain, as we will show by example in Section~\ref{definitions}, where also the relevant definitions from \cite{Ratcliffe2019HyperbolicManifolds} are recalled.
For actions on a locally compact metric space that are discontinuous and cocompact, local finiteness of a fundamental domain  is {\em equivalent} to its relative compactness, see \cite[Theorem 6.6.9]{Ratcliffe2019HyperbolicManifolds}.
Here we will generalise this result to continuous actions as follows.
 \begin{theorem}\label{main_theorem}
 Let $G$ be a group of homeomorphisms of a locally compact topological space $X$ so that $X/G$ is compact, and let $R$ be a fundamental region.
 Then the following are equivalent:
	\begin{enumerate}
		\item\label{locfinThm} $R$ is locally finite,
		\item\label{strlocfinThm} $R$ is strongly locally finite,
		\item\label{fsaThm} $\overline R$ has a neighbourhood $U$ that is met by  $gU$  for only finitely many $g\in G$.
	\end{enumerate}
 Moreover, if $R$ is	
locally finite, then $\overline R$ is compact. If in addition,  
 $G$ acts  properly discontinuously and $X$ is  first countable and Hausdorff, 
the the converse holds, i.e., if $\overline R$  is compact, then $R$ is locally finite.
 \end{theorem}
 En route to this theorem, in Section~\ref{result-sec} we prove several results about fundamental regions of properly discontinuous or cocompact group actions that, whenever possible, generalise results for isometric actions  as in \cite[Section 6]{Ratcliffe2019HyperbolicManifolds}. We believe that these results  are inherently interesting in their own right. We also provide several examples that  illustrate the distinction of the equivalent properties in Theorem~\ref{main_theorem} when its assumptions are not satisfied.
 
 Returning to the original problem of analysing when a homothety is (in)essential, we can use Proposition~\ref{prop_homoth-essential-implies-no-FSA-FR} and Theorem~\ref{main_theorem} to obtain the following result.
 
 \begin{corollary}\label{introcor}
	Let $(M,g)$ be a semi-Riemannian manifold with  a non-trivial homothety $\phi$ and let $R$ be a fundamental region for $G=\langle \phi\rangle $. If $G$ acts cocompactly and  $R$ is locally finite, then   $\phi$ is inessential, and hence has no fixed points.
	In particular, if  $G$ acts properly discontinuously and $\overline{R}$ is compact, then  $\phi$ is inessential.
	\end{corollary}
The motivation for this result comes the aforementioned example of Euclidean space $\R^n$ with  homothety $\phi:x\mapsto \lambda x$, which is essential, but has a fixed point and hence  does not act properly discontinuously. If the origin is removed, $\langle \phi\rangle$ acts properly discontinuously and with an annulus as  fundamental domain, so that $\R^n/\langle \phi\rangle$  is a torus, but $\phi$ is no longer essential. The corollary shows that this phenomenon persists for arbitrary semi-Riemannian manifolds.
\subsection*{Acknowledgements} 
	The results in this paper are contained in the second author's MPhil thesis \cite{stuart-thesis}, which was written under supervision of the first author. We thank Michael Eastwood for co-supervising and for helpful discussions. We also like to thank Nick Buchdahl for valuable comments.
Finally, we would like to thank Fanny Kassel and Pierre-Louis Blayac for very helpful comments that not only improved the presentation but also the results of this paper substantially.
As a result, this final version is substantially different from the first version \cite{LeistnerTeisseire21}, in which the notion of a finitely self adjacent fundamental region played a central role, see Remark~\ref{fsa-remark} for further explanation.

\section{Definitions, examples and existence}\label{definitions}
Let $G$ be a topological group acting continuously on a topological space $X$. We denote by $X/G$ the orbit space of this action, endowed with the final topology.

We will say that a set $U\subset X$ \emph{meets} a set $V\subset X$ if their intersection $U\cap V$ is non-empty.
The action of $G$ on $X$ is said to be \emph{properly discontinuous} if any pair of points $x,y\in X$ have open neighbourhoods $U$ and $V$ respectively, such that $gU$ meets $V$ for only finitely many $g$. This implies in particular that the stabiliser of a point is finite.

There are many inequivalent notions of properly discontinuous, which are equivalent for Hausdorff, first countable spaces. In Kapovich's note \cite{kapovich2023note}, Theorem 11 gives a nice selection of these various notions.
Ratcliffe \cite[Section~5.3]{Ratcliffe2019HyperbolicManifolds} defines a discontinuous action by requiring  that
compact sets meet their $G$-translates for only finitely many $g$.
We have instead chosen to use the slightly stronger notion defined in the previous paragraph; firstly because it seems the most natural in our proofs, and secondly because in the context of studying locally finite fundamental regions, which we will define now, it turns out to be harmless, see \cref{thmproperly-discontinuous-is-harmless}.

An open set $R\subset X$ is a \emph{fundamental region} for the action if
\begin{itemize}
	\item $G$-translates of $R$ are mutually disjoint, i.e.~if $gR$ meets $R$, then $g$ is the identity,
	\item and $G$-translates of its closure $\overline R$ cover $X$.
\end{itemize}
We note that some authors impose the additional assumption that $R$ is the interior of its closure, e.g. \cite[Section 6]{kapovich2023note}. We will not use this additional assumption in any of the results of this paper, so we have not included it here.

Commonly, authors work with connected fundamental regions. A connected fundamental region is called a \emph{fundamental domain}. We will not use the connectedness assumption in any of the results of this paper, so we use fundamental regions instead.

A fundamental region $R$ is \emph{locally finite} if $\{g\overline R\}_{g\in G}$ is a locally finite family of sets, i.e.~every point $x\in X$ has an open neighbourhood $U$, such that $U$ meets $g\overline R$ for only finitely many $g\in G$. By the properties of the closure, this is equivalent to $\{gR\}_{g\in G}$ being a locally finite family of sets.

We will now define the notion of `strongly locally finite', which was mentioned in the introduction.
\begin{definition}\label{slf}
A fundamental region $R$ is \emph{strongly locally finite} if its closure $\overline R$ has an open neighbourhood $U$ such that $\{g U\}_{g\in G}$ is a locally finite family of sets. In this case we call $U$ a {\em strongly locally finite neighbourhood} of $\overline R$.
\end{definition}
Note that, if $U$ is an open neighbourhood of $\overline{R}$, then $gU$ is a neighbourhood of $g\overline R$, so if a fundamental region is strongly locally finite  it is also locally finite.
It is not clear to the authors if  strong local finiteness is equivalent to local finiteness when $X$ is Hausdorff, but the properties are distinct for merely $T_0$ spaces, as the following example shows.
\begin{example}
	Consider a topology on $X:= \Z^2$ generated by the singletons $\{(i,i)\}$ and  the    sets
	$$ \qquad \{(i,j),(i,i),(j,j)\}, \quad \text{  for any $i\neq j\in \Z$.}$$
The topology of  $X$ induces the discrete topology on the subset  of the diagonal elements, but with the points $(i,j)$ acting as a boundary between $(i,i)$ and $(j,j)$. The space $X$ is $T_0$, but not $T_1$.

	Consider the action of the integers on $X$ by
	$$k\cdot (i,j) := (i+k, j+k),$$
and the fundamental region $R:= \{(0,0)\}$. Since its closure is
	$$\overline R = \{(0,i)\ |\ i\in \Z\} \cup \{(i,0)\ |\ i\in \Z\},$$
	this is indeed a fundamental region. With 
	$$k\cdot \overline R = \{(k,i)\ |\ i\in \Z\} \cup \{(i,k)\ |\ i\in \Z\},$$ it follows that $R$ 
 is locally finite.  
However, since every open neighbourhood of $\overline R$ contains the entire diagonal, $R$ is not strongly locally finite.
\end{example}

\begin{remark}[Finitely self adjacent fundamental regions]  \label{fsa-remark}
In \cite{stuart-thesis} and  \cite{LeistnerTeisseire21}  a fundamental region $R$ that satisfies property~(\ref{fsaThm})  in Theorem~\ref{main_theorem} was called a {\em finitely self adjacent} fundamental region. We were informed by Pierre-Louis Blayac that stronger versions of some results in \cite{LeistnerTeisseire21} should be true that do not require this notion, which lead us to the statement of Theorem~\ref{main_theorem}. 
Also in   \cite{stuart-thesis} and  \cite[Section 2]{LeistnerTeisseire21} the example of a finitely self adjacent fundamental region  that is not locally finite was given in detail. This example is obtained by the universal cover $X$ of the punctured Euclidean plane $\R^2/\Z^2$ with a group $G$ acting on $X$ that is generated by infinitely many reflections. Pierre-Louis Blayac also pointed out to us several related and simpler examples of locally finite fundamental regions that are not finitely self adjacent, such as the following.

\begin{example}
 Let $X=\R\times \N$ and $G=\Z$ acting via $k\cdot (x,n)= (x+k,n)$. This action is free, properly discontinuous and by isometries. Then it is easy to see that 
\[R=\bigcup_{n\in \N} \left( (0,\tfrac{1}{2})\cup (\tfrac{1}{2}+n, 1+n)\right)\times \{n\}\]
is a fundamental region, that is strongly locally finite, and hence locally finite. However $R$ is not  finitely self adjacent. Indeed, for each $k\in \N$ the  $-k$ translate of $\overline{R}$, $-k\cdot (\overline{R})$  contains $\left( [-k,\tfrac{1}{2}-k] \cup [\tfrac{1}{2},1]\right)\times \{k\}$. Hence, for each open neighbourhood $U$ of $\overline{R}$ and each $k\in \N$, the points $x_k=(\tfrac{1}{2},k)$ are in the intersection of $U$ and $-k\cdot U$.
\end{example}

\end{remark}

We are interested in a condition on the group action that relates the relative compactness of the fundamental region to the compactness of the orbit space. One implication is trivial: if there is a relatively compact fundamental region $R$, i.e.~with compact closure, then the orbit space $X/G$ is compact because the projection map restricted to $\overline{R}$ is continuous and surjective.  However the converse, that a fundamental region is compact if the orbit space is compact is  false in general, as the following example illustrates.

	\begin{example}\label{ex_fundamental-region-not-locally-finite}
		Consider $X=\R$ and $G=\Z$. The action of $\Z$ on $\R$ (as an additive subgroup) is cocompact and properly discontinuous. The quotient $\R/\Z$ is the circle $\S^1$.
		Now we consider  the set $R = \bigcup_{n\in\N}(n+\frac n{n+1},n+\frac{n+1}{n+2})$. We note that since $\bigcup_{n\in\N} \overline{(\frac n{n+1},\frac{n+1}{n+2})} = [0,1)$, $R$ is a fundamental region for this action, yet $\overline R$ is not compact. 
		
This example can be modified to obtain a connected fundamental region, i.e.~a fundamental domain, which is not compact despite the action being cocompact. We do this by taking $X=\R^2$ and $G=\Z^2$.
		The quotient $X/G$ is the torus.
		Then we define $R:= \bigcup_{x\in(0,1)}\left(\{x\}\times (\frac1x,\frac1x+1)\right)$. Then $R$ is a connected fundamental region for a properly discontinuous and cocompact action, but $\overline R$ is not compact.
	\end{example}

Both fundamental regions in the example lack the property of 
 local finiteness, which is important when analysing the compactness of the orbit space in relation to the fundamental domain.

We conclude this section with few remarks on the existence of locally finite fundamental regions. We do not explore the details of these results in this paper, since we have no new results to contribute to the question of existence.

For many important continuous group actions, the existence of a locally finite fundamental region can be established. The prototypical example is an isometric group action on a metric space: here a locally finite fundamental region exists if the metric space is 
geodesically connected, geodesically complete and finitely compact (i.e.~if its closed balls are compact).
Geodesic completeness is a strong condition on a metric space. It means that any geodesic arc defined on a closed interval extends to a unique geodesic line $\lambda:\mathbb{R}\to X$.  We will briefly summarise the proof following \cite[Section~6.6]{Ratcliffe2019HyperbolicManifolds}.

First it is shown that a geodesically connected and complete metric space $(X,d)$ admits a point $x\in X$ for which the stabiliser in $G$ is trivial  \cite[Theorems~6.6.10 and 6.6.12]{Ratcliffe2019HyperbolicManifolds}. For such a point $x$ one considers the {\em Dirichlet domain}
\[D_{x}:=\{y\in X\mid d(x,y)<d(x,g(y))\text{ for all }e\not=g\in G\},\]
and shows that under the assumptions on $(X,d)$ that $D_x$ is a fundamental region that is  locally finite \cite[Theorem~6.6.13]{Ratcliffe2019HyperbolicManifolds}.

Kapovich's paper \cite{kapovich2023note} also provides  the following generalisation of this result to a class of actions which allow the construction of an invariant metric\footnote{After the first version of our article appeared on the arXiv,  Pierre-Louis Blayac outlined to us a different method of proving a version of  Kapovich's result. There is also an earlier  discussion in \cite{mof16}. Note  that  in the newest version of \cite{kapovich2023note} the relevant Section~7 about fundamental regions is completely rewritten and that the notion of {\em open fundamental region} in Definition~41 includes local finiteness.}.

	\begin{theoremnop*}[{\cite[Theorem 62]{kapovich2023note}}]
	Let $X$ be second countable, Hausdorff, connected, locally connected,  and locally compact. Let $G$ be a discrete and countable group that  acts freely and properly on $X$.
		Then there exists a locally finite fundamental region for the action.
	\end{theoremnop*}

\section{Results on fundamental regions}\label{result-sec}
In Section 6.6 of Ratcliffe's book \cite{Ratcliffe2019HyperbolicManifolds}, he presents theorems about fundamental domains of isometric group actions. Several of these theorems still make sense in the weakened context of continuous group actions, and here we present some of their modified proofs.
The motivation of this work is in results that relate the relative compactness of a fundamental region to the compactness of the orbit space. We consider the most useful results for applications to be the generalisations of Ratcliffe's Theorems 6.6.7 and 6.6.9, corresponding to our Theorems~\ref{Rat667} and \ref{Rat669}.
Note that only some of the proofs are analogous to Ratcliffe's arguments. When it is necessary to highlight the difference to the result for isometric actions, we state the relevant theorem from \cite{Ratcliffe2019HyperbolicManifolds} prior to stating our version of the result.

Our first  result is a direct generalisation of Theorem 6.6.3 in Ratcliffe's book \cite{Ratcliffe2019HyperbolicManifolds}.
	\begin{theorem}\label{thm_fundamental-region-implies-discrete}\label{Rat663}
		If a topological group $G$ acting continuously on a topological space $X$ admits a fundamental region, then $G$ has the discrete topology.
	\end{theorem}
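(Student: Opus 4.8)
The plan is to reduce the statement to the single assertion that the singleton $\{e\}$ containing the identity $e\in G$ is open. Since $G$ is a topological group, left translation $L_g\colon G\to G$, $h\mapsto gh$, is a homeomorphism, so once $\{e\}$ is open the set $\{g\}=L_g(\{e\})$ is open for every $g\in G$, and $G$ carries the discrete topology. Thus everything reduces to producing an open neighbourhood of $e$ in $G$ that contains no group element other than $e$ itself.

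The key tool is the continuity of the action together with the disjointness built into the definition of a fundamental region. First I would fix a point $x_0\in R$; such a point exists because $R$ is nonempty (assuming $X\neq\emptyset$, the covering $\{g\overline R\}_{g\in G}$ of $X$ forces $\overline R$, and hence $R$, to be nonempty). The orbit map $\phi\colon G\to X$, $\phi(g)=g\,x_0$, is continuous, being the restriction of the action $G\times X\to X$ to $G\times\{x_0\}$. Since $R$ is open and $\phi(e)=x_0\in R$, the preimage $\phi^{-1}(R)$ is an open neighbourhood of $e$ in $G$.

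Next I would show that this neighbourhood is exactly $\{e\}$. If $g\in\phi^{-1}(R)$, then $g\,x_0\in R$, which rewrites as $x_0\in g^{-1}R$; since also $x_0\in R=eR$, the translates $eR$ and $g^{-1}R$ share the point $x_0$. By the defining property of a fundamental region the family $\{hR\}_{h\in G}$ is mutually disjoint, so $R\cap g^{-1}R\neq\emptyset$ forces $g^{-1}=e$, that is $g=e$. As $e\in\phi^{-1}(R)$ trivially, we conclude $\phi^{-1}(R)=\{e\}$, so $\{e\}$ is open and $G$ is discrete.

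This is the topological distillation of Ratcliffe's metric argument: the only substantive change is that the metric ball used there to separate nearby isometries is replaced by the open set $R$ itself, pulled back through the continuous orbit map. I expect no genuine obstacle, since the covering condition is needed only to guarantee $R\neq\emptyset$ and the disjointness condition does all the real work. The one point demanding a little care is the bookkeeping in passing from $g\,x_0\in R$ to the intersection $R\cap g^{-1}R$, ensuring the translate is indexed by $g^{-1}$ rather than $g$; once that is checked, the proof is immediate.
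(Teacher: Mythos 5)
Your proof is correct and is essentially the same argument as the paper's: the paper pulls back $R\times R$ under the continuous map $\Theta(g,x)=(x,gx)$ and uses disjointness of the translates to see that the preimage is $\{e\}\times R$, while your orbit map $g\mapsto gx_0$ is exactly the restriction of $\Theta$ to $G\times\{x_0\}$, with disjointness playing the identical role. If anything, yours is slightly more careful, since you make explicit the nonemptiness of $R$ and the final step from openness of $\{e\}$ to discreteness via left translations, both of which the paper leaves implicit.
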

	\begin{proof}
		If $G$ acts continuously, then the map $\Theta:G\times X\to X\times X$, given by $\Theta(g,x) = (x,gx)$ is continuous.
		Let $R$ be a fundamental region for this action, in particular we need that $R$ is an open set, with $\{gR\}_{g\in G}$ mutually disjoint.
		Then $\Theta^{-1}(R\times R) = \{\id\}\times R$, where $\id$ is the identity element of $G$.
		Hence $\{\id\}$ is open in $G$, and so $G$ carries the discrete topology.
	\end{proof}
\cref{thm_fundamental-region-implies-discrete} holds some interest in its own right. It states that for actions admitting fundamental regions, the discrete topology is the only relevant topology. It also tells us that for any continuous group action, a subgroup admitting a fundamental region must be discrete with the subset topology. In particular, this doesn't necessarily require the computation of the compact-open topology of the ambient group (nor any other particular topology).

A natural extension of this result also holds, stating that for actions admitting locally finite fundamental regions, the action must be properly discontinuous. This statement does not have a direct analogue in Ratcliffe's book, although we could consider it to also be an analogue of his Theorem 6.6.3, when combined with his Theorem 5.3.5.
\begin{theorem}\label{thmproperly-discontinuous-is-harmless}
	Let $R$ be a locally finite fundamental region for a group $G$ acting on a topological space $X$. Then the action is properly discontinuous.
\end{theorem}
\begin{proof}
	Let $x,y\in X$. Since $R$ is locally finite, we may take open neighbourhoods $U,V$ of $x,y$ respectively, such that $\{g\in G\ |\ U \text{ meets } g\overline R\}$, and $\{h\in G\ |\ V \text{ meets } h\overline R\}$ are both finite.

	We claim that the open neighbourhoods $U$ and $V$ exhibit proper discontinuity: Assume that for some $g\in G$, $gU$ meets $V$. Then, since the $g\overline R$ cover $X$, we find that $gU$ must meet $h_i \overline R$, for some $h_i$ in the finite set ${\{h\in G\ |\ V \text{ meets } h\overline R\}}.$

	But then $U$ meets $g^{-1} h_i \overline R$, and hence $g^{-1} h_i = g_j$ for some $g_j$ in the finite set $\{g\in G\ |\ U \text{ meets }g\overline R\}$.
	Hence $g = h_i g_j^{-1}$, i.e. $g$ is in the finite set 
		\[\{h\in G\ |\ V \text{ meets } h\overline R \} \cdot \{g\in G\ |\ U \text{ meets } g\overline R \}^{-1} .\]
	This shows that $gU$ meets $V$ for only finitely many $g$, and hence the action is properly discontinuous.
\end{proof}

\begin{remark}\label{ratremark} For completeness, let us mention that there are more results in Ratcliffe that have an immediate generalisation with an identical proof, which we therefore however do not repeat here. Examples are the following \cite[Theorems 6.6.4 and 6.6.5]{Ratcliffe2019HyperbolicManifolds}:

{\em Let  $G$ be a group of homeomorphisms of a topological space $X$, $R$ be a fundamental region with boundary $\partial R$, and $g\in G$.
\begin{enumerate}
\item If $g\neq\id$, then $\overline R\cap g\overline R \subset \partial R$.
\item If $g$ fixes a point of $X$, then $g$ is conjugate in $G$ to an element $h$ such that $h$ fixes a point of $\partial R$.
\end{enumerate}}
\end{remark}

Now, one of the most significant theorems in this section of Ratcliffe's book is the following.
	\begin{theoremnop*}[{\cite[Theorem 6.6.7]{Ratcliffe2019HyperbolicManifolds}}]
Let $G$ be a discontinuous group  of isometries of a metric space $X$ and  $R$  a fundamental region. Then the inclusion $\iota:\overline R\to X$ induces a continuous bijection $\kappa:\overline R/G\to X/G$, and $\kappa$ is a homeomorphism if and only if $R$ is locally finite.
	\end{theoremnop*}
In our version of the theorem, we need some topological conditions for one direction, however, these conditions are still more general than being a metric space.
	\begin{theorem}\label{thm_kappa-homeomorphism}\label{Rat667}\label{thm_Ratcliffe-LF-quotient-is-whole-quotient}
		Let $G$ be a topological group acting continuously on a topological space $X$. Let $R$ a fundamental region.
		Then the inclusion $\iota:\overline R\to X$ induces a continuous bijection $\kappa:\overline R/G\to X/G$. If $R$ is locally finite, then $\kappa$ is a homeomorphism.

		Further, if the action is properly discontinuous, $X$ is first-countable, and singletons in $X$ are closed, then the converse holds: If $\kappa$ is a homeomorphism, then $R$ is locally finite.
	\end{theorem}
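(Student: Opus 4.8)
The first assertion is formal, so I would dispose of it quickly. The orbit equivalence relation of $X$ restricts to $\overline R$, two points of $\overline R$ being identified exactly when they lie in a common $G$-orbit of $X$, and $\overline R/G$ carries the resulting quotient topology with projection $\pi_R$. The composite $\pi_X\circ\iota$ identifies orbit-equivalent points of $\overline R$, hence factors as $\kappa\circ\pi_R$, and $\kappa$ is continuous by the universal property of $\pi_R$. It is surjective because the family $\{g\overline R\}_{g\in G}$ covers $X$, so every orbit has a representative in $\overline R$, and injective by the very definition of the relation on $\overline R$.

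For the implication ``locally finite $\Rightarrow$ homeomorphism'' I would show that the continuous bijection $\kappa$ is a closed map. Since $\pi_X$ and $\pi_R$ are quotient maps, for $C:=\pi_R^{-1}(F)$ with $F\subseteq\overline R/G$ closed one has $\pi_X^{-1}(\kappa(F))=GC$, so it is enough to prove that $GC$ is closed in $X$ whenever $C\subseteq\overline R$ is closed. Given $p\in\overline{GC}$, local finiteness provides a neighbourhood $N$ of $p$ meeting only finitely many translates $g_1\overline R,\dots,g_m\overline R$; as $gC\subseteq g\overline R$, this gives $GC\cap N=(g_1C\cup\cdots\cup g_mC)\cap N$. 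Because $p\in\overline{GC\cap N}\subseteq g_1C\cup\cdots\cup g_mC\subseteq GC$, the point $p$ lies in $GC$, so $GC$ is closed, $\kappa$ is closed, and a continuous closed bijection is a homeomorphism.

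The converse is where first countability and proper discontinuity are needed, and I would argue by contradiction. Assume $\kappa$ is a homeomorphism but $R$ is not locally finite, so some $x_0$ has every neighbourhood meeting infinitely many $g\overline R$. Fixing a nested countable neighbourhood basis $N_1\supseteq N_2\supseteq\cdots$ at $x_0$ and using that an open set meeting $\overline{gR}=g\overline R$ already meets $gR$, I would choose distinct $g_k\in G$ and points $y_k\in R$ with $p_k:=g_ky_k\in N_k$, whence $p_k\to x_0$. The first key step is that $\{y_k\}$ has no convergent subsequence in $\overline R$: were $y_{k_j}\to y_*$, the separation clause of proper discontinuity applied to the pair $(y_*,x_0)$ would force $g_{k_j}y_*=x_0$ for all large $j$, which contradicts freeness together with the distinctness of the $g_{k_j}$. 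Thus $\{y_k\}$ is closed and discrete in $\overline R$. Picking $z_0\in\overline R$ with $Gz_0=Gx_0$ and using continuity of $\kappa^{-1}$, the convergence $\pi_X(y_k)=\pi_X(p_k)\to\pi_X(z_0)$ in $X/G$ yields $[y_k]_R\to[z_0]_R$ in $\overline R/G$.

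It remains to contradict this convergence, and here I would produce a saturated open set $O\subseteq\overline R$ with $z_0\in O$ but $y_k\notin O$ for infinitely many $k$. Proper discontinuity supplies a slice $V\ni z_0$ with $gV\cap V=\emptyset$ for $g\neq\id$, so each orbit $Gy_k$ has at most one representative in $V$, and these representatives converge to $z_0$, while the escaping discrete sequence $\{y_k\}$ itself sits in the remaining pairwise disjoint translates of $V$. I expect the construction of $O$ to be the main obstacle: the naive candidate $\overline R\cap GV$ fails, because it reaches along the orbits and recaptures the $y_k$, exactly the phenomenon visible in \cref{ex_fundamental-region-not-locally-finite}. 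The way through is to observe that the representative of $Gy_k$ near $z_0$ typically lies outside $\overline R$, so that a sufficiently small saturated neighbourhood of $z_0$ inside $\overline R$ separates it from $\{y_k\}$; controlling the exceptional case, where such a representative does return to $\overline R$, is precisely where \cref{RatThm6.6.4}—which confines every coincidence $\overline R\cap g\overline R$ with $g\neq\id$ to $\partial R$—enters, pinning those returning points to the boundary and allowing $O$ to avoid them.
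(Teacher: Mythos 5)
Your first two parts are correct. The continuous-bijection part is the same formal argument as the paper's. For ``locally finite implies homeomorphism'' you take a genuinely different route: you prove $\kappa$ is a \emph{closed} map, whereas the paper proves it is \emph{open} (it saturates $\overline R\cap U$ to $W=\bigcup_g g(\overline R\cap U)$ and shows $W$ is open by shrinking a local-finiteness neighbourhood $V$ of $w\in W$ to $V\cap g_1V\cap\cdots\cap g_mV$). Your version reduces to the standard fact that the union of a locally finite family of closed sets is closed: $GC=\bigcup_g gC$ with $gC\subseteq g\overline R$, so near any point only finitely many $g_iC$ are visible, and their finite union is closed. This is correct (note only that $C$ is closed in $X$, not just in $\overline R$, because $\overline R$ is closed in $X$; that is what makes each $g_iC$ closed) and arguably cleaner than the paper's openness argument; both use local finiteness at exactly the same point.

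The converse is where you have a genuine gap, and you flag it yourself: the saturated open set $O$ is never constructed. Everything before that is sound and parallels the paper --- the sequence $p_k=g_ky_k\to x_0$ with distinct $g_k$, the no-convergent-subsequence argument via proper discontinuity plus freeness, closedness and discreteness of $K=\{y_k\}$, and the transfer $[y_k]_R\to[z_0]_R$ through $\kappa^{-1}$. But the sketched construction of $O$ (slices around $z_0$, with \cref{RatThm6.6.4} ``pinning returning representatives to the boundary'') does not assemble into a proof: even if orbit points $gy_k$ that return to $\overline R$ were confined to $\partial R$, they would still lie in $\overline R$, and a saturated open $O\ni z_0$ would still have to exclude their whole classes; \cref{RatThm6.6.4} by itself does not give you that.

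The observation that closes the gap is stronger than \cref{RatThm6.6.4} and is one line: for $g\neq\id$ one has $gR\cap\overline R=\emptyset$, because $gR$ is open and disjoint from $R$, so if it met $\overline R$ it would meet $R$. Consequently each $y_k\in R$ is the \emph{only} representative of its orbit in $\overline R$, so $K$ is saturated for the relation on $\overline R$; moreover $z_0\notin K$, since $z_0=y_k$ would give $x_0\in gR$ for some $g$, and then $gR$ would be a neighbourhood of $x_0$ meeting $g'\overline R$ only for $g'=g$ (same disjointness), contradicting the failure of local finiteness at $x_0$. Now $O:=\overline R\setminus K$ is relatively open, saturated, contains $z_0$, and misses every $y_k$; its image in $\overline R/G$ is an open neighbourhood of $[z_0]_R$ containing no $[y_k]_R$, contradicting the convergence you established. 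This is precisely the content of the paper's step ``since $K\subset R$ and $\eta$ is injective on $R$, we have $\eta^{-1}(\eta(K))=K$'', after which the paper phrases the contradiction dually, with closed sets ($\pi(K)=\kappa(\eta(K))$ is closed, yet $\pi(y_k)\to\pi(x_0)\notin\pi(K)$) rather than with an open neighbourhood; the two formulations are equivalent, and with this insertion your argument is complete.
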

	\begin{proof}
		We define $\kappa:Gx\cap \overline R\mapsto Gx$. Then $\kappa$ is immediately injective. It is surjective because $R$ is a fundamental region, and so every orbit has a member in $\overline R$.

		Let $\eta:\overline R\to \overline R/G$ be the projection map. So $\kappa$ is continuous if and only if $\kappa\circ\eta$ is continuous.
		Then $\kappa\circ\eta$ is continuous by the following commutative diagram
		$$\begin{tikzcd}
			\overline R \arrow[d, "\eta"] \arrow[r, "\iota"] & X \arrow[d, "\pi"] \\
			\overline R/G \arrow[r, "\kappa"]                & X/G               
		\end{tikzcd}$$
		So $\kappa$ is a continuous bijection.

		Let $R$ be locally finite. We show $\kappa$ is an open map. Let $\tilde U\subset \overline R/G$ be open. By definition, this is true if and only if $\eta^{-1}(\tilde U)$ is open, i.e.~$\eta^{-1}(\tilde U) = \overline R\cap U$, for some open set $U\subset X$.
		Define
		$$W:= \bigcup_{g\in G} g(\overline R\cap U).$$
		Note that $\pi(W) = \pi(\overline R\cap U) = \pi\circ \iota(\overline R\cap U) = \kappa\circ\eta(\overline R\cap U) = \kappa(\tilde U)$.
		So, noting that $\pi$ is an open map since we quotient by a group action by homeomorphisms, it suffices to prove that $W$ is open.

		Let $w\in W$. Since $R$ is locally finite, there is an open neighbourhood $\tilde V$ of $w$ that meets $\{g\overline R\}$ for only finitely many $g$. In particular, we have
		$$\tilde V\subset g_1\overline R\cup\hdots \cup g_m\overline R.$$
		Without loss of generality, there is a $k\in \{1, \ldots , m\}$ such that  $w\in g_i\overline R$ for all $i=1, \ldots ,k$. 		Then we consider the smaller open set
		$$
		V:= \tilde V \cap g_1U\cap\hdots \cap g_k U.
		$$
		We must verify that $V$ still contains $w$.
		Note that when $w\in g_i\overline R$, i.e. $g_i^{-1}w\in \overline R$, then $\kappa\circ \eta(g_i^{-1}w) = \pi\circ \iota(g_i^{-1}w) = \pi(g_i^{-1}w) = \pi(w)$.
		Then, using the fact from above that $\pi(W) = \kappa(\tilde U)$, together with the injectivity of $\kappa$, we have that $\eta(g_i^{-1}w) \in \tilde U$, and so $g_i^{-1}w \in \eta^{-1}(\tilde U)$.
		So, $g_i^{-1}w\in U$, hence $w\in g_iU$ for each $i$. Thus we have concluded that $V$ contains $w$.

		Then finally, $V\subset W$, since
		$$(g_1\overline R\cup\hdots \cup g_m\overline R)\cap g_1U\cap\hdots \cap g_mU \subset (g_1\overline R\cap g_1U) \cup\hdots\cup (g_m\overline R \cap g_mU).$$
		Hence $V$ is an open neighbourhood of $w$ contained in $W$, so $W$ is open. This completes the first part of the proof.

		Assume that $X$ is first countable and the action is properly discontinuous. For contradiction, also assume $\kappa$ is a homeomorphism and that $R$ is not locally finite. Let $y\in X$ be a point at which $\{g\overline R\}_{g\in G}$ is not locally finite. Note first of all that $y$ cannot be within $\hat gR$ for any $\hat g$, since $\hat gR$ is then a neighbourhood of $y$ that meets $\{g\overline R\}_{g\in G}$ for only finitely many (one) $g$.
		Since $X$ is first countable, we take a sequence $x_i\in R$ and $g_i$ distinct in $G$ such that $g_ix_i$ converges to $y$.
		We claim that $K:= \{x_i\}_{i\in\N}$ is closed.

		Let $x\in X\setminus K$. 
		Since the action is properly discontinuous, we take open neighbourhoods $U, V$ of $x, y$ respectively, such that $U$ meets $gV$ for only finitely many $g$.

		Since $g_ix_i\to y$, We have that $g_ix_i$ must eventually be contained in $V$. Hence for all but finitely many $i$, $x_i$ is contained in $g_i^{-1}V$. And so for all but finitely many $i$, $x_i$ is not in $U$.
		Then, since singletons are closed, 
		removing all points $x_i$ from the set $U$ gives
		an open neighbourhood of $x$, which does not meet $K$. So $K$ is closed.

		Since $K\subset R$ and $\eta$ is injective on $R$, we have that $\eta^{-1}(\eta(K))=K$ is closed, so $\eta(K)$ is closed in the final topology.
		Then assuming that $\kappa$ is a homeomorphism, we get further that $\pi(K) = \kappa(\eta(K))$ is closed.

		As $\pi$ is continuous, we have that $\pi(g_ix_i)\to \pi(y)$ and hence $\pi(x_i)\to \pi(y)$.
		But since $y$ is not in $gR$ for any $g$ and $K\subset R$, it must be that $\pi(y)$ is not an element of $\pi(K)$. This contradicts closedness of $\pi(K)$.
	\end{proof}

	Next we encounter two closely related theorems in Ratcliffe's book that do not easily generalise.  

		\begin{theoremnop*}[{\cite[Theorem 6.6.8]{Ratcliffe2019HyperbolicManifolds}}]
		Let $G$ be a group of isometries of a metric space $X$, $R$ a  locally finite fundamental region and  
 $x$  a boundary point of $R$.
		Then $\partial R\cap G x$ is finite and there is an $r>0$ such that, if $N(R,r)$ is the $r$-neighborhood of $R$ in $X$, then
		\[N(R,r)\cap G x = \partial R \cap G x.\]
	\end{theoremnop*}
	Ratcliffe's proof of this Theorem does not translate without additional structure, because in it, the metric is leveraged to make a choice of reasonable neighbourhood (in particular, the way the distance can be reduced to uniformly approach $\overline R$ is crucial to excluding $gx$'s away from the boundary).
	This theorem is used crucially in Ratcliffe's proof of the following theorem, a statement for which we would very much like an analogue as it relates the relative compactness of the fundamental region to that of the orbit space.

\begin{theoremnop*}[{\cite[Theorem 6.6.9]{Ratcliffe2019HyperbolicManifolds}}]
		Let $G$ be a discontinuous group of isometries of a locally compact metric space $X$ such that $X/G$ is compact and let $R$ be a fundamental region. Then $R$ is locally finite if and only if $\overline R$ is compact.
	\end{theoremnop*}

It turns out that for a generalisation of \cite[Theorem 6.6.8]{Ratcliffe2019HyperbolicManifolds}
we require the fundamental regions to be strongly locally finite, as defined in the introduction and in Definition~\ref{slf}, see Theorem~\ref{Rat668} below. 
Nevertheless, 
for an analogue of Ratcliffe's Theorem~6.6.9 we will provide a new proof that does not use the statement of  Theorem~\ref{Rat668}, and hence  does not require the stronger version of locally finite. This will be formulated in  Theorem~\ref{Rat669} below, which also implies that last statement of Theorem~\ref{main_theorem}.

Before we prove these results, we will establish the main part of Theorem~\ref{main_theorem} by showing  that for  cocompact actions local finiteness implies strong local finiteness
and that both are  equivalent to an even stronger condition for cocompact actions.
\begin{theorem}\label{thmcocompact-LF-FSA-are-equivalent}
	Let $G$ be a group of homeomorphisms of a topological space $X$. Let $R$ be a locally finite fundamental region.
	If the quotient $X/G$ is compact, then $\overline R$ has an open neighbourhood $U$ such that $gU$ meets $U$ for only finitely many $g\in G$.

	Hence for actions whose quotient is compact, the following are equivalent:
	\begin{itemize}
		\item $R$ is locally finite,
		\item $R$ is strongly locally finite,
		\item $\overline R$ has an open neighbourhood $U$ that meets $gU$  for only finitely many $g\in G$.
	\end{itemize}
\end{theorem}
\begin{proof}
	Immediately the third property implies the second, which implies the first. So all that needs to be shown is that locally finite implies the third property.

	Assume $\{g\overline R\}_{g\in G}$ is locally finite. We choose for each $x\in \overline R$, an open neigbourhood $U_x$ that meets $g\overline R$ for only finitely many $g$.

	Then $\{\pi(U_x)\}_{x\in\overline R}$ is an open cover of $X/G$, and so we can take a finite subcover $\{\pi (U_i)\}_{i\in I}$.

	Lifting this finite subcover we cannot conclude that the $U_i$'s cover $\overline R$, but their $G$-translates do: We have that $\{gU_i\}_{g\in G, i\in I}$ covers $\overline R$.

	But then by the local finiteness assumption, for each $i$, there is only finitely many $g$ such that $gU_i$ meets $\overline R$.

	Hence by considering the union of these $g$'s, we may assume we have two finite collections: $\{U_i\}_{i\in I}$, $\{g_a\}_{a\in A}$, such that:
	\begin{itemize}
		\item The finite set $\{g_aU_i\}_{a\in A,i\in I}$ covers $\overline R$, and
		\item if $g U_i$ meets $\overline R$, then $g\in \{g_a\}_{a\in A}$. 
	\end{itemize} 
	We claim that $V:= \bigcup_{a\in A,i\in I} g_aU_i$ is an open neighbourhood of $\overline R$ exhibiting the conclusion of the theorem: that $gV$ meets $V$ for only finitely many $g\in G$.

	Assume that $gV$ meets $V$ for some $g$. Then in particular, we have some $a,b,i,j$ such that $gg_aU_i$ meets $g_bU_j$.

	But note that $\{g_c\overline R\}_{c\in A}$ must cover $U_j$, since $\overline R$ is a fundamental region. And hence there must be some $c\in A$ such that $gg_aU_i$ meets $g_bg_c \overline R$, i.e. $U_i$ meets $g_a^{-1} g^{-1} g_b g_c\overline R$.

	Finally then again by the local finiteness assumption, it must be that this element $g_a^{-1} g^{-1} g_b g_c$ is equal to $g_d$ for some $d\in A$.

	Therefore $g$ is contained in $\{g_bg_cg_d^{-1}g_a^{-1}\}_{a,b,c,d\in A}$, which is a finite set.
\end{proof}

Now we turn to  an analogue of Ratcliffe's Theorem~6.6.8. With the stronger  assumption the proof is relatively straightforward: strong local finiteness overcomes the difficulty that arises from the action not necessarily being isometric.
	\begin{theorem}\label{Rat668}
		Let $G$ be a group of homeomorphisms of a topological space $X$. Let $R$ a strongly locally finite fundamental region and $x$ a boundary point of $R$.
		Then $\partial R\cap G x$ is finite.

		Further, if $X$ is $T_1$, then there is an open neighbourhood $U$ of $\overline R$ such that
		$$U\cap G x = \partial R\cap G x.$$
	\end{theorem}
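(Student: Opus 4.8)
The plan is to use the finitely self adjacent neighbourhood $U$ of $\overline R$ as the single ambient set that traps all relevant orbit points, and then to carve away the unwanted points using the hypothesis that singletons (hence finite sets) are closed. The whole argument will avoid any use of a metric and rely only on \cref{RatThm6.6.4}.

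First I would establish the finiteness. Let $U$ be a finitely self adjacent neighbourhood of $\overline R$, so that $S:=\{g\in G : gU\cap U\neq\emptyset\}$ is finite. Since $x\in\partial R\subseteq\overline R\subseteq U$, for every $g\in G$ we automatically have $gx\in gU$. Hence if in addition $gx\in U$, then $gx\in U\cap gU$, forcing $g\in S$. Therefore $U\cap Gx\subseteq\{gx : g\in S\}$ is finite, and as $\partial R\cap Gx\subseteq\overline R\cap Gx\subseteq U\cap Gx$, the first claim that $\partial R\cap Gx$ is finite follows at once.

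The key step, which I expect to be the crux, is the observation that the orbit $Gx$ never enters the open region $R$, that is $R\cap Gx=\emptyset$. This is exactly where \cref{RatThm6.6.4} enters. If $gx\in R$ for some $g\neq\id$, then, since $x\in\overline R$ gives $gx\in g\overline R$, we would have $gx\in R\cap g\overline R\subseteq\overline R\cap g\overline R\subseteq\partial R$ by \cref{RatThm6.6.4}, contradicting $R\cap\partial R=\emptyset$; and the case $g=\id$ is excluded because $gx=x\in\partial R$ is not in $R$. Consequently every point of $Gx$ lies either in $\partial R$ or entirely outside $\overline R$.

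Finally I would construct the desired neighbourhood by excision. Set $F:=(U\cap Gx)\setminus\partial R$. By the first step $F$ is finite, hence closed once singletons are closed, and by the key step each of its points lies outside $\overline R$, so $\overline R\cap F=\emptyset$. Then $U':=U\setminus F$ is open, still contains $\overline R$, and satisfies $U'\cap Gx=(U\cap Gx)\cap\partial R=\partial R\cap Gx$, the last equality using $\partial R\cap Gx\subseteq U$. This $U'$ is the required neighbourhood. The finiteness and the closing excision are routine; the only genuine obstacle is the middle step, since without knowing that $Gx$ avoids the interior $R$, interior orbit points would survive in any neighbourhood of $\overline R$ and spoil the claimed equality.
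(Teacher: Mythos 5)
Your proposal is correct and takes essentially the same route as the paper's proof: use the finitely self adjacent neighbourhood to force any $g$ with $gx\in U$ into the finite set $S$, giving finiteness of $\partial R\cap Gx$, and then excise the finitely many non-boundary orbit points, which form a closed set because singletons are closed. You are in fact slightly more careful than the paper, which leaves implicit the verification (your ``key step'', via \cref{RatThm6.6.4}) that $Gx$ never meets $R$, so that the excised points lie outside $\overline R$ and the resulting open set is still a neighbourhood of $\overline R$.
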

	\begin{proof}
		Since $R$ is strongly locally finite, it is also locally finite, so for some open neighbourhood $V$ of $x$, $V$ meets $g\overline R$ for only finitely many $g\in G$. In particular then $\{x\}$ must also meet $g\overline R$ for only finitely many $g$, and so we conclude $gx$ is in $\overline R$ for only finitely many $g$.

		Now assume that $X$ is $T_1$, i.e.~that singletons in $X$ are closed, and let $\hat U$ be a strongly locally finite open neighbourhood of $\overline R$.  Then we define
		\[U:= \hat U\setminus \{gx\in X\setminus \partial R\},\]
which  is open since each $\{gx\}$ is closed and since only finitely many $gx$ are contained in $\hat U$.
	\end{proof}
Now we will establish an analogue for Ratcliffe's Theorem~6.6.8 with a proof that does not use   Theorem~\ref{Rat668} and hence  does not require the stronger version of locally finite. It will also imply the last statement in Theorem~\ref{main_theorem}.

	\begin{theorem}\label{thm_FSA-iff-FR-compact}\label{Rat669}
		Let $G$ be a group of homeomorphisms of a locally compact topological space $X$.
		Let $X/G$ be compact, and let $R$ be a fundamental region.
		If $R$ is locally finite, then $\overline R$ is compact.
		
		\nopagebreak
				Further, let $X$ be first countable, Hausdorff, and let the action be properly discontinuous. If $\overline R$ is compact, then $R$ is locally finite.
	\end{theorem}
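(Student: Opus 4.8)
The plan is to treat the two implications separately. For the first, that finite self adjacency forces $\overline R$ to be compact, the strategy is to trap $\overline R$ inside a finite union of translates of a fixed compact set and then use that $\overline R$ is closed. First I would combine local compactness with compactness of $X/G$ to build a compact transversal: choosing for each point a relatively compact open neighbourhood, applying the open map $\pi\colon X\to X/G$ and extracting a finite subcover of $X/G$ yields a relatively compact open set $W$ with $GW=X$; then $C:=\overline W$ is compact and still satisfies $GC=X$. Let $U\supseteq\overline R$ be a finitely self adjacent neighbourhood and put $F:=\{g\in G:gU\cap U\neq\emptyset\}$, a finite set by hypothesis.

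The heart of the argument is a finiteness estimate. Since $R$ is a fundamental region, $\{g\overline R\}_{g\in G}$ covers $X$, so $GU=X$; as $C$ is compact this yields finitely many $h_1,\dots,h_k\in G$ with $C\subseteq h_1U\cup\dots\cup h_kU$. Now consider $S:=\{g\in G:gW\cap\overline R\neq\emptyset\}$. If $g\in S$ then $gW$ meets $U$ (because $\overline R\subseteq U$), and since $gW\subseteq gh_1U\cup\dots\cup gh_kU$ some $gh_iU$ meets $U$, whence $gh_i\in F$ and $g\in\bigcup_i Fh_i^{-1}$. Thus $S$ is finite, and $\overline R\subseteq\bigcup_{g\in G}gW=\bigcup_{g\in S}gW\subseteq\bigcup_{g\in S}gC$, a finite union of compact sets; being a closed subset of this compact set, $\overline R$ is compact. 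I expect this estimate on $S$ to be the main obstacle: it is exactly the place where finite self adjacency, rather than mere local finiteness, is needed, since it lets one replace the single neighbourhood $U$ by the finitely many translates $gh_iU$ and invoke finiteness of $F$.

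For the converse, assume in addition that $X$ is first countable and the action is properly discontinuous, and that $\overline R$ is compact. Suppose for contradiction that $R$ is not locally finite, so that at some $y\in X$ every neighbourhood meets infinitely many $g\overline R$; by first countability one may choose distinct $g_n\in G$ and points $x_n\in\overline R$ with $g_nx_n\to y$. Since $\overline R$ is compact and $X$ is first countable, pass to a subsequence with $x_n\to x\in\overline R$. Applying proper discontinuity to the pair $(x,y)$ gives neighbourhoods $U\ni x$ and $V\ni y$ with $gU\cap V=\emptyset$ unless $gx=y$; as $x_n\in U$ and $g_nx_n\in V$ for all large $n$, this forces $g_nx=y$ for infinitely many distinct $g_n$, contradicting freeness of the action. (Alternatively, one may observe that $\overline R/G$ is compact, that $X/G$ is Hausdorff by proper discontinuity, and hence that the continuous bijection $\kappa$ of \cref{Rat667} is a homeomorphism, so that local finiteness follows directly from the converse half of \cref{Rat667}.)
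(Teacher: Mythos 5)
Your proof is correct, so let me compare it with the paper's. For the first implication you take essentially the paper's route: both arguments manufacture a compact set whose $G$-translates cover $X$ (your $C=\overline W$, the paper's $K$ with $\pi(K)=X/G$), cover it by finitely many translates $h_1U,\dots,h_kU$ of the finitely self adjacent neighbourhood $U$, and use finiteness of $F=\{g\in G : gU\cap U\neq\emptyset\}$ to trap $\overline R$ inside a finite union of translates of the compact set; your bookkeeping via $S=\{g : gW\cap\overline R\neq\emptyset\}$ differs only cosmetically from the paper's factorisation $g=g_i\hat g_j$. For the converse, your primary argument genuinely departs from the paper: the paper notes that $\overline R/G$ is compact and that $X/G$ is Hausdorff by proper discontinuity, concludes that the continuous bijection $\kappa$ is a homeomorphism, and then quotes the converse half of \cref{Rat667} --- which is exactly your parenthetical alternative. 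Your direct route instead takes, at a point $y$ where local finiteness fails, distinct $g_n$ and points $x_n\in\overline R$ with $g_nx_n\to y$, uses compactness together with (hereditary) first countability of $\overline R$ to extract a convergent subsequence $x_{n_k}\to x\in\overline R$, and applies proper discontinuity at the pair $(x,y)$ to produce two distinct elements sending $x$ to $y$, contradicting freeness. This is a leaner, self-contained argument: it inlines what the paper delegates to \cref{Rat667}, and the compactness of $\overline R$ lets you replace the more delicate closed-set argument in the proof of \cref{Rat667} (where no compactness is available) by a simple subsequence extraction, while also avoiding any appeal to Hausdorffness of $X/G$. What the paper's route buys in exchange is reuse of already-proved machinery, making its proof of this half two lines long. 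One minor remark: your construction of the compact transversal assumes each point has an open neighbourhood with compact closure, but the paper's bare assertion that ``there is a compact $K$ with $\pi(K)=X/G$'' rests on the same reading of local compactness, so this is not a gap relative to the source.
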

	\begin{proof}
		We prove the second statement first. Under the assumptions, $X/G$ must be Hausdorff \cite[Theorem~11 and Lemma~9]{kapovich2023note}. Then $\kappa:\overline R/G\to X/G$ is a continuous bijection from a compact space to a Hausdorff space and so is a homeomorphism. Then we apply \cref{thm_kappa-homeomorphism} to conclude that $R$ is locally finite.

		Now we prove the first statement.
		Let $X/G$ be compact. Since $X$ is locally compact, there is a compact set $K$ such that $\pi(K) = X/G$ (which can be found by taking a finite subcover of $X/G$ by $\pi(\operatorname{interior}(K_x))$ for compact neighbourhoods $K_x$ of $x\in X$). We now show that we can cover $\overline R$ by a finite union of $gK$.

		By assumption, $R$ is locally finite, so each $x\in K$ has an open neighbourhood $V_x$, such that $V_x$ meets $g\overline R$ for only finitely many $g$.
		Since these $V_x$'s cover the compact set $K$, we take a finite subcover. Then, by considering the union of this subcover, we have an open neighbourhood $V$ of $K$, such that $V$ meets $g\overline R$ for only finitely many $g$, say $g\in \{g_1,\ldots, g_m\}$.

We note that if $gK$ meets $\overline R$, then $V$ meets $g^{-1}\overline R$, so that, $g^{-1}\in \{g_1, \ldots, g_m\}$.
As a consequence, 
$gK$ only meets $\overline R$ for finitely many $g$. But by assumption, $\pi(K) = X/G$, and so in particular $GK$ must cover $\overline R$. Hence $\overline R$ is contained in the finite union $\bigcup_i g_i^{-1}K$, and so $\overline R$ is compact.
	\end{proof}

	Together with Theorem~\ref{thmcocompact-LF-FSA-are-equivalent}, we conclude Theorem~\ref{main_theorem} as a corollary.
\section{Applications}\label{appl}
	In this section, we provide two applications of these results and the notion of strong local finiteness to the geometric problems that motivated this investigation. In particular, we will prove Proposition~\ref{prop_homoth-essential-implies-no-FSA-FR} and hence Corollary~\ref{introcor}.

	First, we use Theorem~\ref{thm_FSA-iff-FR-compact} to prove a general result about a group that acts by translations in one component. It gives an efficient criterion for the cocompactness of such actions, which we used in \cite{LeistnerTeisseire22} when  studying cocompact actions on indecomposable Lorentzian symmetric spaces of non-constant sectional curvature. Note that the following result can be straightforwardly generalised to an action of $\Z^n$ on a product of $X$ with $\R^n$.

\begin{proposition}\label{prop_quotienting-with-a-translation-component}
		For a locally compact topological space $X$  consider $X\times \R$ with the product topology and let $\mathrm{pr}_\R:X\times \R\to \R$ be the projection onto $\R$.
If 		$\phi$ is a homeomorphism of $X\times \R$ such that 
		\[\mathrm{pr}_\R\circ \phi\,(x,t)= t+c,\quad\text{ for some $c>0$},\]
		then $X\times (0,c)$ is a strongly locally finite fundamental region for the action of the group $\<\phi\>$. In particular, then the quotient $X\times \R/\<\phi\>$ is compact if and only if $X$ is compact.
	\end{proposition}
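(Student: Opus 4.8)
The plan is to exploit that the projection $\mathrm{pr}_\R$ intertwines the $\<\phi\>$-action with translation on $\R$. The hypothesis says precisely that $\mathrm{pr}_\R\circ\phi = (\,\cdot + c)\circ\mathrm{pr}_\R$, and iterating gives $\mathrm{pr}_\R(\phi^n(x,t)) = t+nc$ for every $n\in\Z$. (We may assume $c>0$, since otherwise we replace $\phi$ by $\phi^{-1}$, and note that $\phi^n$ shifts the $\R$-coordinate by $nc\neq 0$ when $n\neq0$, so the elements $\phi^n$ are distinct, $\<\phi\>\cong\Z$, and the family $\{\phi^n R\}_{n\in\Z}$ is what we must analyse.) Everything is thus governed by the translation action of $c\Z$ on $\R$, for which $(0,c)$ is the evident fundamental region, and the whole argument amounts to lifting this picture along $\mathrm{pr}_\R$.

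First I would check that $R=X\times(0,c)$ is a fundamental region. Since closures commute with products, $\overline R = X\times[0,c]$. The slab estimate $\phi^n R\subset X\times(nc,(n+1)c)$ places the translates in pairwise disjoint open slabs, which gives mutual disjointness of $\{\phi^n R\}$. For the covering property, given $(x,t)\in X\times\R$ I choose the integer $n$ with $t/c-1\le n\le t/c$; then $\mathrm{pr}_\R(\phi^{-n}(x,t)) = t-nc\in[0,c]$, so $\phi^{-n}(x,t)\in X\times[0,c]=\overline R$ and hence $(x,t)\in\phi^n\overline R$. Thus $\{\phi^n\overline R\}_{n\in\Z}$ covers $X\times\R$, and $R$ is a fundamental region.

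Next, for finite self adjacency I take the open neighbourhood $U:=X\times(-c/2,3c/2)$ of $\overline R$. As above, the $\R$-coordinates of points of $\phi^n U$ are confined to $(nc-c/2,\,nc+3c/2)$, so a necessary condition for $\phi^nU\cap U\neq\emptyset$ is that this interval meet $(-c/2,3c/2)$, which occurs only for $n\in\{-1,0,1\}$. Hence $\phi^nU$ meets $U$ for only finitely many $n$ and $R$ is finitely self adjacent. The key structural point throughout is that both $\overline R$ and $U$ are products with the whole of $X$, so the possibly complicated behaviour of $\phi$ on the $X$-factor never enters: the entire analysis is controlled by $\mathrm{pr}_\R$.

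Finally, for the compactness equivalence I would invoke \cref{thm_FSA-iff-FR-compact}. The space $X\times\R$ is locally compact because $X$ and $\R$ are. If the quotient $X\times\R/\<\phi\>$ is compact, then since $R$ is a finitely self adjacent fundamental region, \cref{thm_FSA-iff-FR-compact} yields that $\overline R=X\times[0,c]$ is compact, and projecting onto the first factor (a continuous surjection) shows $X$ is compact. Conversely, if $X$ is compact then $\overline R=X\times[0,c]$ is compact, and the restriction of the quotient map to $\overline R$ is a continuous surjection onto $X\times\R/\<\phi\>$, so the quotient is compact. I do not expect a genuine obstacle here; the only points requiring care are that $(0,c)$ be nonempty (hence $c>0$), that local compactness passes to the product, and that the neighbourhoods used are full in the $X$-direction, so that no hypothesis on the action along $X$ is ever needed.
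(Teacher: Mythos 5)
Your proof is correct and follows essentially the same route as the paper's: the same slab picture $\phi^n R\subset X\times(nc,(n+1)c)$ for disjointness and covering, a product neighbourhood ($X\times(-c/2,3c/2)$ versus the paper's $X\times(-c,2c)$) for finite self adjacency, and the same appeal to \cref{thm_FSA-iff-FR-compact} for the compactness equivalence. Your explicit reduction to $c>0$ and your separate treatment of the trivial direction (compact $\overline R$ implies compact quotient) are small refinements of details the paper leaves implicit, not a different argument.
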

	\begin{proof}
		We define
		\begin{align*}
			R&:= X\times(0,c) = \{(x,t)\in X\times \R\ |\ 0<t<c\}
		\end{align*}
		and show $R$ is a strongly locally finite fundamental region.
		Note that \[\phi^m R = X\times(mc,mc+c),\] and hence $\phi^mR$ does not meet $R$ for $m\neq 0$.
		Similarly, $\phi^m\overline R = X\times[mc,mc+c]$ and so $\bigcup_{m\in\Z} \phi^m\overline R = X\times \R$ so that $R$ is a fundamental region.
		Define an open neighbourhood of $\overline R$,
		$$U:= X\times(-c,2c).$$
		Note that $\phi^m U = X\times(mc-c, mc+2c)$ and so if $\phi^mU$ meets $U$, then $m\in\{-2,-1,0,1,2\}$.
		Therefore since $GU$ covers the space, $R$ is strongly locally finite.

		Note that $X\times \R$ is locally compact because $X$ and $\R$ are locally compact. Then by Theorem~\ref{thm_FSA-iff-FR-compact}, $(X\times \R)/\<\phi\>$ is compact if and only if $X\times [0,c]$ is compact, which is true if and only if $X$ is compact.
	\end{proof}
	Note further that, applying \cref{thm_Ratcliffe-LF-quotient-is-whole-quotient}, $(X\times\R)/\<\phi\>$ is homeomorphic to $\overline R/\<\phi\>$, which is simply $X\times [0,c]/\sim$, where $\sim$ is a gluing rule on the boundary $X\times\{0\}\sim^\phi X\times\{c\}$.

\bigskip

Finally,
we will prove Proposition~\ref{prop_homoth-essential-implies-no-FSA-FR} and hence Corollary~\ref{introcor}, both relevant to the results in  \cite{LeistnerTeisseire22}. For a semi-Riemannian manifold, recall the definition of inessential and essential homotheties of $(M,g)$ from the introduction. It is easy to see and shown in  \cite[Proposition 2.5]{LeistnerTeisseire22} that a non-trivial homothety with a fixed point must be essential. Moreover, for indecomposable Lorentzian symmetric spaces with nonconstant sectional curvature, so-called {\em Cahen--Wallach spaces}, the converse holds \cite[Theorem 1.2]{LeistnerTeisseire22}, however for general semi-Riemannian manifolds this is not clear. Proposition~\ref{prop_homoth-essential-implies-no-FSA-FR} in the introduction is a step towards this converse by showing that the group generated by an essential homothety cannot admit a strongly locally finite fundamental region. This situation is summarized in the following diagram for any strict homothety $\phi$:
$$
\begin{tikzcd}[column sep=-10em]
	& \<\phi\>\text{ has a strongly locally finite fundamental region} \arrow[rd, "\text{Proposition \ref{prop_homoth-essential-implies-no-FSA-FR}}", Rightarrow] & \\
	\phi\text{ has no fixed points} & & \phi\text{ is not essential} \arrow[ll, Rightarrow]
\end{tikzcd}
$$
The missing implication on the left, which would complete the equivalence, is demonstrated in \cite[Theorem 4.4.1]{stuart-thesis} for homotheties of Cahen--Wallach spaces, and conjectured in general.
%
	\begin{proof}[Proof of Proposition~\ref{prop_homoth-essential-implies-no-FSA-FR}]
We will construct a function $f$ such that $f\circ \phi + s = f$.
By assumption, $\overline R$ has an open neighbourhood $U$ such that $\{\phi^iU\}_{i\in \Z}$ is locally finite. We show that we can construct a partition of unity subordinate to this cover, with the additional property that $f_i = f_{i+1}\circ\phi$.

		Take a bump function $\hat f_0\geq0$, whose support is in $U$, such that $\hat f_0|_{\overline R} \equiv 1$. Then for each $i\in\Z$, define
		$$\hat f_i:= \hat f_0\circ \phi^{-i}.$$
		Note that the support of $\hat f_i$ is contained within $\phi^i(U)$ and $\hat f_i|_{\phi^i(\overline R)}\equiv 1$. Note also that
\[
		\hat f_i = \hat f_0\circ \phi^{-i-1}\circ\phi
= \hat f_{i+1}\circ \phi.
\]
		Then we define
		$$f_i:= \hat f_i/\Big(\sum_{j\in\Z} \hat f_j\Big)$$
		Since the support of $\hat f_i$ are locally finite, the sum is finite in an open neighbourhood of each point, so $f_i$ is smooth since $\hat f_j$ are all smooth. Since $\phi^j(\overline R)$ cover $M$ and $\hat f_j\geq 0$, we see that $\sum_{j\in\Z}\hat f_j$ has no zeros, so $f_i$ is defined everywhere.
		Hence $\{f_i\}$ is a partition of unity, which satisfies $f_i = f_{i+1}\circ \phi$.

		Now we define the rescaling $f$, so that $\phi$ is an isometry of $\mathrm{e}^{2f}g$. Let $\phi^*g=\mathrm{e}^{2s}g$ and define
		$$f:= -s\sum_{i\in\Z}(if_i).$$
		The function $f$ is smooth since since each $f_i$ is smooth and their supports are locally finite. The function $f$ also has the property that
\[
			f\circ\phi = -s\sum_{i\in\Z}(if_i\circ\phi)
				= -s\sum_{i\in\Z}(if_i) -s \sum_{i\in \Z} f_i
				= f -s.
\]
		Hence we see that
\[
			\phi^*(\mathrm{e}^{2f}g) = \mathrm{e}^{2f\circ\phi}\phi^*g
			= \mathrm{e}^{2f\circ\phi}\mathrm{e}^{2s}g\\
				= \mathrm{e}^{2f}g,
\]
		and so $\phi$ is not essential.
	\end{proof}
Together with Theorem~\ref{main_theorem}	we obtain Corollary~\ref{introcor}
 for cyclic groups of homotheties acting with  a compact manifold as  orbit space.


\bibliographystyle{abbrv}

\providecommand{\MR}[1]{}\def\cprime{$'$} \def\cprime{$'$} \def\cprime{$'$}

\end{document}